\documentclass{elsart}               

\usepackage{stmaryrd}
\usepackage{graphicx}          

\usepackage{cite}
\usepackage{amsmath,amssymb,amsfonts}
\usepackage{algcompatible}
\usepackage{algorithm}
\usepackage{epstopdf}
\usepackage{hyperref}
\usepackage{textcomp}
\usepackage{bm}
\usepackage{adjustbox}
\usepackage{amssymb}
\usepackage{enumitem}
\usepackage{subcaption}
\usepackage{xcolor}

\newcommand{\norm}[1]{\left\lVert#1\right\rVert}

\newtheorem{theorem}{\bf{Theorem}}

\newtheorem{corollary}{\bf{Corollary}}

\newtheorem{assumption}{Assumption}

\newcommand{\DF}{\mathrm{D}\mathbf{F}}
\newcommand{\F}{\mathbf{F}}

\newcommand{\xx}{\mathbf{x}}

\newcommand{\zz}{\mathbf{z}}
\renewcommand{\AA}{\mathbf{A}}
\newcommand{\bb}{\mathbf{b}}

\newcommand{\FF}{\mathbf{F}}

\newcommand{\MM}{\mathbf{M}}
\newcommand{\yy}{\mathbf{y}}

\newcommand{\HH}{\mathbf{H}}
\renewcommand{\S}{\mathbf{S}}

\newcommand{\R}{\mathrm{R}}

\DeclareMathOperator*{\argmin}{arg\,min}
\DeclareMathOperator{\proj}{proj}
\DeclareMathOperator{\card}{card}

\begin{document}

\begin{frontmatter}

\title{Online Sketched Newton-Raphson} 

\author[jll]{Jean-Luc~Lupien}\ead{jllupien@berkeley.edu},\author[ymp]{Yuen-Man~Pun}\ead{yuenman.pun@anu.edu.au}, \author[yd]{Youssef~Diouane}\ead{youssef.diouane@polymtl.ca}, \author[is]{Iman~Shames}\ead{iman.shames@unimelb.edu.au}, and \author[all]{Antoine~Lesage-Landry}\ead{antoine.lesage-landry@polymtl.ca}

\address[jll]{Department of Civil and Environmental Engineering, University of California, Berkeley, CA, United States of America}  
\address[ymp]{School of Engineering, Australian National University, ACT, Australia}%
\address[yd]{Department of Mathematics and Industrial Engineering \& GERAD, Polytechnique Montr\'eal, QC, Canada}%
\address[is]{Department of Electrical and Electronic Engineering, University of Melbourne,  VIC, Australia}%
\address[all]{Department of Electrical Engineering, GERAD \& Mila, Polytechnique Montr\'eal, QC, Canada}%

\begin{keyword}                           
Online convex optimization, Optimization algorithms, Time-varying systems, Machine learning           
\end{keyword}                             

\begin{abstract}                          

In online convex optimization (OCO), a decision-maker is confronted with an unknown environment and seeks to play an optimal sequence of decisions on a short time-scale using only past information. Recent advances in second-order OCO methods have demonstrated tighter regret bounds and improved empirical performance over traditional first-order methods. However, this performance comes at a cost: a matrix inversion is now required, which scales with the cube of the size of the problem. In this work, we propose sketching to mitigate this limitation. Specifically, we present the online sketched Newton-Raphson method (\texttt{OSNR}) which preserves the tight regret bounds obtained with second-order methods while presenting a strict computational improvement in terms of complexity.
 We discuss three application scenarios of \texttt{OSNR}: online root finding, unconstrained OCO, and time-varying equality-constrained OCO, and present their respective regret and a constraint violation bound for the latter. In all three applications, \texttt{OSNR} achieves sublinear dynamic regret bounds. For the equality-constrained case, the extension \texttt{OSNR} with equality constraints (\texttt{OSNR-EC}) is shown to yield sublinear cumulative constraint violation. Finally, we illustrate the performance of \texttt{OSNR} and \texttt{OSNR-EC} on two numerical examples, viz., online position tracking and optimal power flow, and observe that \texttt{OSNR} and \texttt{OSNR-EC} exhibit high performance even at low sampling rates.

\end{abstract}

\end{frontmatter}

\section{Introduction}

Online convex optimization (OCO) considers optimization as a process~\cite{hazanOCO}. In OCO, a decision-maker must act in an uncertain or potentially adversarial environment, using only prior information, to provide an optimal solution to the optimization problem at hand~\cite{zinkevich2003online}. Specific to OCO, the decision-maker is assumed to have to commit to a decision before the environmental loss is revealed. Additionally, it is assumed that limited computational resources and time are available to the decision-maker when facing new rounds, hence motivating the design of streamlined algorithms. The objective for the decision-maker is to minimize their regret, i.e., the cumulative distance from their incurred loss to its minimum at each time step. Specifically, when designing an OCO algorithm, one seeks to obtain a provable upper bound on its regret. The aim is to obtain a regret bound that is sublinear in time, as this implies optimal decisions are dispatched, on average, over a long time horizon. OCO has many applications, including in real-time operations of power grids, online optimal portfolio selection, and online recommendation systems~\cite{OIPM, hazanOCO, Shalev}. 

Sketched Newton-Raphson methods have improved the convergence rate and increased the domain of this family of root-finding algorithms~\cite{sketchedNR, NewtonSketch}. Sketched Newton-Raphson methods are distinctly amenable to online convex optimization because they address the fundamental limitation of prior second-order online algorithms: the time complexity of matrix inversion~\cite{SubsampledNewtonMethods, SketchandProject}. In the context of OCO, the computational constraint makes matrix inversion unrealistic for large-scale problems. However, the sketching process greatly reduces the computational resources required for inversion~\cite{SketchedSum, SubsampledNewtonMethods}. For this reason, this work proposes the online sketched Newton-Raphson (\texttt{OSNR}) for online root finding and second-order online optimization. We finally extend our approach to a time-varying linear equality-constrained setting.

Let $\xx_t\in\mathbb{R}^n$, $n\in\mathbb{N}$, be the decision vector at timestep $t=1,2,3,\dots,T$. Let $g_t:\mathbb{R}^n\mapsto \mathbb{R}$ be the convex loss function at each timestep, and $\mathcal{X}\subseteq \mathbb{R}^n$ be a compact, convex set. The standard OCO problem can then be expressed as:
\begin{equation}
    \min_{\xx_t\in\mathcal{X}}g_t(\xx_t).
\end{equation}
OCO algorithm's performance is characterized in terms of the regret which can be either static or dynamic. 
Static regret compares the loss suffered from the sequence of decisions to the best fixed decision in hindsight. 
In this work, we focus on dynamic regret as it is a stricter performance indicator and better suited for many engineering applications, e.g., tracking or cost minimization. Dynamic regret over a time horizon $T$, $\R(T)$, benchmarks the sequence of decisions against the round-optimal decision at each timestep. This is expressed as:
\begin{equation}
    \R(T) = \sum_{t=1}^T g_t(\xx_t)-g_t(\xx^*_t),
\end{equation}
where $\xx_t^*$ denotes the round-optimal decision at each timestep $t$: $\xx_t^* \in \argmin_{\xx\in \mathcal{X}} g_t(\xx)$. A bound on the dynamic regret, henceforth only referred as regret, implies a bound on the static regret, as the round-optimal decisions will always yield lower or equal loss than the best fixed decision in hindsight.

In this work, we first consider a slightly different context: online root-finding. By using the OCO framework, we establish provable performance guaranties adapted to the online setting. While the decision-vector $\xx_t$ is the same as in standard OCO problems, we now consider differentiable functions of the following form: $\F_t:\mathbb{R}^n\to \mathbb{R}^m$, $n,m\in\mathbb{N}$ as objectives. The problem to solve at each round then becomes:
\begin{equation}
    \min_{\xx_t \in \mathbb{R}^n} \norm{\F_t(\xx_t)},
\end{equation}
for $t=1,2,\dots,T$ and where $\norm{\cdot}$ denotes the Euclidean norm. Assuming that the function $\F_t$ has a non-empty set of zeros, the minimum will therefore always be $0$. The regret reduces simply to:
\begin{equation}
    \R_{\text{0}}(T) = \sum_{t=1}^T \norm{\F_t(\xx_t)}.
\end{equation}

\emph{Related work.}
The literature stream closest to \texttt{OSNR} is that of the second-order OCO methods. These algorithms tend to outperform their first-order counterparts, such as gradient descent~\cite{hazanOCO, OnlineNewton}, while also avoiding the potentially cumbersome projection step. For example, in~\cite{HazanInterior}, an interior-point method is presented, and a tight static regret bound is obtained. More recently,~\cite{OnlineNewton} has proposed an online Newton's method and has achieved a sublinear dynamic regret. This work is extended in~\cite{OPENM} and~\cite{OIPM} by incorporating time-varying linear and convex conic constraints, respectively, while conserving the sublinear dynamic regret bounds. OCO aims to provide streamlined decision rules to promote a high speed of computation given limited resources. However, the aforementioned second-order methods do require a costly matrix inversion step, which replaces the projection step as the update's computational bottleneck. 
The inversion has a complexity of $\mathcal{O}(n^3)$ where $n$ is the size of the matrix. This hints at a potential gain if the dimension of the matrix-to-be-inverted is effectively decreased.

Sketching presents a solution to this problem by sub-sampling the matrix before inverting it~\cite{RandomizedSubNewton}. This process can reduce the complexity of inversion to $\mathcal{O}(m\tau^2+\tau^3)$ where $\tau$ is the  sketch size to be defined in the next section~\cite{NewtonSketch}. In~\cite{luo2017efficientsecondorderonline}, the authors use a sketched version of Newton's method in an online optimization setting. The method displays good empirical results; however, no formal analysis is provided. As such, there is no theoretical bound on the regret for this approach.

Next, we provide background definitions and assumptions in Section~\ref{sec:background}. The online sketched Newton-Raphson step is characterized in Section~\ref{sec:step}. Our method for online root-tracking, OCO, and OCO subject to time-varying equality constraints are presented in~Section~\ref{sec:algo} together with their respective regret analysis. Numerical examples are discussed in Section~\ref{sec:eg} and concluding remarks are provided in~Section~\ref{sec:conc}.


\section{Background}\label{sec:background}

We start by providing the main definitions and assumptions of this work.

\subsection{Definitions}

We seek to find the zeros of a differential function $\F_t: \mathbb{R}^n\to \mathbb{R}^m$ with $n,m\in\mathbb{N}$. Let $\mathcal{X}^0_t = \left\{\xx\in \mathbb{R}^n \, | \, \F_t(\xx^*)=0\right\}$ be the set of zeros of $\F_t$ We suppose the set $\mathcal{X}^0_t$ to be non-empty for all $t$. We define the transpose of the Jacobian matrix of $\F_t$, $\DF_t : \mathbb{R}^n\to \mathbb{R}^{n\times m}$, as \[\DF_t(\xx) = \begin{bmatrix}
    \frac{\partial F_t(1)}{\partial x(1)} (\xx)&\frac{\partial F_t(2)}{\partial x(1)}(\xx)&\cdots&\frac{\partial F_t(m)}{\partial x(1)}(\xx)\\
    \frac{\partial F_t(2)}{\partial x(1)}(\xx)&\frac{\partial F_t(1)}{\partial x(2)}(\xx)&\cdots&\frac{\partial F_t(m)}{\partial x(n)}(\xx)\\
    \vdots&\vdots&\ddots&\vdots\\\frac{\partial F_t(1)}{\partial x(1)}(\xx)&\frac{\partial F_t(2)}{\partial x(2)}(\xx)&\cdots&\frac{\partial F_t(m)}{\partial x(n)}(\xx)
\end{bmatrix}.\]

For simplicity and numerical performance, we employ the uniform sub-sampling sketching matrices in this work~\cite{SketchedBasics}. These matrices are sampled from a distribution $\mathbb{D}_\tau$ with $\tau \in \{1,2,\ldots, n\} \equiv\llbracket 1, n  \rrbracket$ being the sketch size. Let $\S\in\mathbb{R}^{m\times \tau}$ denote the sketching matrix sampled from $\mathbb{D}_\tau$.
First, define the sketched Hessian function under sketching $\S$ at $t$ and $\xx$ as:
\[\HH^{\S}_t(\xx) = \S(\S^\top \DF_t(\xx)^\top \DF_t(\xx) \S)^\dagger \S^\top,\]
where $^\dagger$ represents the Moore-Penrose pseudo-inverse~\cite{MoorePenroseBasics}. Then, let $f^{\S,\yy}_t(\xx)$ denote the square of the function $\F_t$ in the sketched Hessian norm. This can be expressed as:
\[f^{\S,\yy}_t(\xx) = \frac{1}{2}\mathbf{F}_t(\xx)^\top \HH^{\S}_t(\yy)\mathbf{F}_t(\xx) = \frac{1}{2}\norm{\mathbf{F}_t(\xx)}_{\HH^{\S}_t(\yy)}^2.\]
Taking the expected value over the possible sketching matrices, we define $f^\yy_t (\xx)$ as:
\[f^{\yy}_t(\xx) = \mathbb{E}_\S\left[f^{\S,\yy}_t(\xx)\right].\]
Finally, we note the identity from \cite[Lemma 4.1]{sketchedNR} linking $f^{\S,\yy}(\xx)_t$ to its gradient:
\begin{equation}\label{gradIdent}\frac{1}{2}\norm{\nabla f^{\S,\yy}_t(\xx)}^2 = f^{\S,\yy}_t(\xx).\end{equation}
Hereinafter, the norm denotes the Euclidean norm. 

When tracking online solutions over time, we use the 
cumulative variation between zeros, $V_T$,  as:
\[V_T = \sum_{t=1}^{T} \textrm{H}_{\text{d}}(\mathcal{X}^0_t, \mathcal{X}^0_{t-1}),\]
where $\textrm{H}_{\text{d}}$ represents the Hausdorff distance between sets and $\mathcal{X}^0_t$ represents the set of zeros of the function $\F_t$ at consecutive timesteps. When the set of zeros is a singleton, i.e., $\mathcal{X}^0_t=\{\xx_t^*\}$, we retrieve the common definition of cumulative variation given by:
\[V_T = \sum_{t=1}^T \norm{\xx_t^*-\xx_{t+1}^*}.\]

\subsection{Assumptions}
We now state our two main assumptions.

\begin{assumption} \label{ass:lip} We consider that $\F_t$ is Lipschitz continuous for all $t$. Specifically, we suppose there exists $0<L_{\F_t} <+\infty$ such that:
\[\norm{\F_t(\xx)-\F_t(\yy)}\le L_{\F_t} \norm{\xx-\yy}\quad \forall \xx,\yy \in \mathbb{R}^n,  \forall t.\]\end{assumption}
This assumption is mild and common in the OCO literature stream~\cite{hazanOCO, Shalev}. We further define $\displaystyle L_\F = \max_{t=1,2,\ldots,T} L_{\F_t}$.
\begin{assumption} \label{ass:quasi} We suppose that $f^\xx_t(\xx)$ is a special case of (1,$\mu$)-strongly quasar-convex for all time $t$. Specifically, we assume that there exists $0<\mu_t<1$ such that~\cite{hermant2024study}:
\begin{equation}
\begin{aligned}
f^{\xx}_t (\xx^*) \ge& \; f^{\xx}_t(\xx) + \nabla f^\xx_t(\xx)^\top(\xx^*-\xx)\\
&\qquad \quad + \frac{\mu_t}{2}\norm{\xx^*-\xx}^2 \quad \forall \xx \in \mathbb{R}^n,  \forall t.
\end{aligned}\label{strconvex}
\end{equation}
\end{assumption}
Finally, we also define $\displaystyle\mu = \min_{t=1,2,\ldots,T} \mu_t$.

\section{Online Sketched Newton-Raphson}
\label{sec:step}
In this section, we introduce the sketched Newton-Raphson step and derive an online optimization algorithm leveraging it. We will also provide regret bounds for our approach when applied to different scenarios of online problems.


Given a sketch $\S_t \sim \mathbb{D}_\tau$ and an initial iterate $\xx_t$ at time~$t$, the sketched Newton-Raphson step is defined as:
\begin{equation*}
\begin{aligned}
    \label{NRStep}
    \xx_{t+1} &= \xx_t\\
    & - \DF_t(\xx_t)\S_t(\S_t^\top \DF_t(\xx_t)^\top \DF_t(\xx_t)\S_t)^\dagger \S_t^\top \F(\xx_t).
\end{aligned}    
\end{equation*}
The sketched Newton-Raphson step can be re-expressed as:
\begin{equation}
\xx_{t+1} = \xx_t-\nabla f^{\S_t,\xx_t}_t(\xx_t). \label{eq:skns}
\end{equation}

The computational complexity of this update is given by: $\mathcal{O}(m\tau+n\tau^2+\tau^3)$ \cite{sketchedNR}. This complexity is upper bounded by that of the standard Newton-Raphson update. 

We next establish the strict improvement, in the expected sense, of the sketched Newton-Raphson step~\eqref{eq:skns} under Assumption~\ref{ass:quasi}.
\begin{lem}
\label{lem:linred}

    Under Assumption 2, the sketched Newton-Raphson step~\eqref{eq:skns} exhibits a strict improvement with respect to a zero of the function $\F_t$. Specifically, the following holds:
\begin{equation*}
    \mathbb{E}\left[\norm{\xx_{t+1}-\xx_t^*}\Big]\le\sqrt{1-\mu} \;\mathbb{E}\Big[\norm{\xx_t-\xx_t^*}\right],
\end{equation*}
where the expectation is taken with respect to all sketch matrices $\{\mathbf{S}_t\}_{t=1}^T$.
\end{lem}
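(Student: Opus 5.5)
We follow the standard SGD-type contraction argument used for sketched Newton--Raphson in \cite{sketchedNR}, with unit step size, and then pass from squared distances to distances by Jensen's inequality. Throughout we write $\xx^*_t\in\mathcal{X}^0_t$ for a zero of $\F_t$. By construction $f^{\S,\yy}_t(\xx^*_t)=0$ for every $\S$ and $\yy$, hence $f^{\xx_t}_t(\xx^*_t)=0$.

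\textbf{Step 1 (expand the square).} From~\eqref{eq:skns},
\[
\norm{\xx_{t+1}-\xx_t^*}^2=\norm{\xx_t-\xx_t^*}^2-2\nabla f^{\S_t,\xx_t}_t(\xx_t)^\top(\xx_t-\xx_t^*)+\norm{\nabla f^{\S_t,\xx_t}_t(\xx_t)}^2 .
\]
By the identity~\eqref{gradIdent}, the last term equals $2f^{\S_t,\xx_t}_t(\xx_t)$.

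\textbf{Step 2 (take the conditional expectation).} We condition on $\xx_t$, which is determined by $\S_1,\dots,\S_{t-1}$ and is independent of $\S_t$. The Hessian weight is frozen at $\yy=\xx_t$, so differentiation and expectation commute: $\mathbb{E}_{\S_t}[\nabla f^{\S_t,\xx_t}_t(\xx)]=\nabla f^{\xx_t}_t(\xx)$. Evaluating at $\xx=\xx_t$ gives
\[
\mathbb{E}\big[\norm{\xx_{t+1}-\xx_t^*}^2\mid\xx_t\big]=\norm{\xx_t-\xx_t^*}^2-2\nabla f^{\xx_t}_t(\xx_t)^\top(\xx_t-\xx_t^*)+2f^{\xx_t}_t(\xx_t).
\]

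\textbf{Step 3 (apply Assumption~\ref{ass:quasi}).} Since $f^{\xx_t}_t(\xx_t^*)=0$, Assumption~\ref{ass:quasi} rearranges to
\[
-\nabla f^{\xx_t}_t(\xx_t)^\top(\xx_t-\xx_t^*)\le -f^{\xx_t}_t(\xx_t)-\tfrac{\mu_t}{2}\norm{\xx_t-\xx_t^*}^2 .
\]
Multiplying by $2$ and substituting into Step 2, the $f$ terms cancel exactly, leaving
\[
\mathbb{E}\big[\norm{\xx_{t+1}-\xx_t^*}^2\mid\xx_t\big]\le(1-\mu_t)\norm{\xx_t-\xx_t^*}^2\le(1-\mu)\norm{\xx_t-\xx_t^*}^2 .
\]
This cancellation is the crux: the unit step is exactly matched by the factor $\tfrac12$ in~\eqref{gradIdent}.

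\textbf{Step 4 (pass to distances).} Conditional Jensen gives $\mathbb{E}[\norm{\xx_{t+1}-\xx_t^*}\mid\xx_t]\le\sqrt{1-\mu}\,\norm{\xx_t-\xx_t^*}$. Taking the total expectation over all sketches yields the claim.

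\textbf{Main obstacle.} The main point of care is Step 2. The assumption is stated for $f^{\xx}_t$, where the point $\xx$ also enters the Hessian weight, but it is used here at the single point $\xx=\xx_t$. One must check that $\nabla f^{\xx_t}_t(\xx_t)$ in Assumption~\ref{ass:quasi} means the gradient with $\yy=\xx_t$ held fixed, which is the convention of \cite{sketchedNR}. The Jensen step and the measurability of $\xx_t$ with respect to past sketches are routine.
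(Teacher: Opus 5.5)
Your proposal is correct and follows the same route as the paper. Both expand the square and replace the squared gradient norm by $2f^{\S_t,\xx_t}_t(\xx_t)$ via~\eqref{gradIdent}, then use Assumption~\ref{ass:quasi} with $f^{\xx_t}_t(\xx_t^*)=0$ so the $f$-terms cancel, and finish with Jensen's inequality. Your explicit conditioning on $\xx_t$ (conditional Jensen, then the tower property) is actually a cleaner justification of the paper's final step, which moves from $\sqrt{1-\mu_t}\norm{\bm{\delta}_t}$ to $\sqrt{1-\mu_t}\,\mathbb{E}[\norm{\xx_t-\xx_t^*}]$ somewhat informally.
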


\begin{pf} Let $\bm{\delta}_t = \xx_t-\xx_t^*$. From the definition of the sketched Newton-Raphson step, we have:
\[
\begin{array}{l}
  \mathbb{E}\left[\norm{\xx_{t+1}-\xx_t^*}^2\right] = \mathbb{E}\Big[\norm{\xx_t-\nabla f^{\S_t,\xx_t}_{t}(\xx_t)-\xx_t^*}^2\Big]\\
    \quad = \norm{\bm{\delta}_t}^2 + \mathbb{E}\Big[-2\bm{\delta}_t^\top\nabla f^{\S_t,\xx_t}_t(\xx_t) +\norm{\nabla f^{\S_t,\xx_t}_t(\xx_t)}^2\Big] \\
     \quad = \norm{\bm{\delta}_t}^2 - 2\bm{\delta}_t^\top\nabla f^{\xx_t}_t(\xx_t)+\mathbb{E}\Big[\norm{\nabla f^{\S_t,\xx_t}_t(\xx_t)}^2\Big].
\end{array}
\]

Rearranging \eqref{strconvex}, we get:
\[\nabla f^{\xx_t}_t(\xx_t)^\top(\xx_t-\xx_t^*) \ge f^{\xx_t}_t(\xx_t)-f^{\xx_t}_t(\xx_t^*)+\frac{\mu_t}{2}\norm{\xx_t^*-\xx_t}^2.\]
Yet, we have that $\bm{\delta}_t^\top\nabla f^{\xx_t}_t(\xx_t) =  \nabla f^{\xx_t}_t(\xx_t)^\top({\xx_t}-\xx_t^*)$ yielding:
\[
\begin{array}{l}
 \mathbb{E}\left[\norm{\xx_{t+1}-\xx_t^*}^2\right] \\
 \quad \le (1-\mu_t)\norm{\bm{\delta}_t}^2 +\mathbb{E}\Big[\norm{\nabla f^{\S_t,\xx_t}_t(\xx_t)}^2] \\
 \qquad \qquad \qquad \qquad \qquad - 2\left(f^{\xx_t}_t(\xx_t)-f^{\xx_t}_t(\xx^*_t)\right)\\
 \quad =(1-\mu_t) \norm{\bm{\delta}_t}^2+2f^{\xx_t}_t(\xx_t) -2\left(f^{\xx_t}_t(\xx_t)-f^{\xx_t}_t(\xx^*_t)\right)\\
 \quad = (1-\mu_t) \norm{\bm{\delta}_t}^2+2f_{\xx_t}(\xx_t^*),
\end{array}
\]

where the second to last equation follows from~\eqref{gradIdent}. Finally, we observe that $f_{\xx_t}(\xx_t^*)=0$ by assumption, given that $\xx_t^*$ is such that $\F_t(\xx_t^*)=\mathbf{0}$ by definition. Therefore, we have:
\begin{align*}
    \mathbb{E}\Big[\norm{\xx_{t+1}-\xx_t^*}^2\Big] &\le (1-\mu_t) \norm{\bm{\delta}_t}^2\\
    \iff \mathbb{E}\Big[\norm{\xx_{t+1}-\xx_t^*}\Big]&\le\sqrt{1-\mu_t}\norm{\bm{\delta_t}}\\
    &\le \sqrt{1-\mu_t}  \mathbb{E}\Big[\norm{\xx_t-\xx_t^*}\Big],
\end{align*}
where we use the concavity of the square root to establish that $\mathbb{E}\Big[\norm{\xx_{t+1}-\xx_t^*}\Big] \le \sqrt{\mathbb{E}\Big[\norm{\xx_{t+1}-\xx_t^*}^2\Big]}$. 
Lower bounding $\mu_t$ by $\mu$ completes the proof. \hfill $\square$
\end{pf}
Although the initial assumption of strong quasi-convexity is quite strong, this result is equally impactful in scope as we obtain an expected linear convergence for the Newton-Raphson step. More importantly, this convergence is realized over the entire domain of the function $\F_t$. Given a tolerance and an initial point, we can calculate the exact number of steps before we expect to reach a solution under this tolerance.

\section{Algorithms}\label{sec:algo}

We now present the main algorithm of this work: the Online Sketched Newton-Raphson (\texttt{OSNR}) method. The idea behind the algorithm is to repeatedly use the sketched Newton-Raphson step in an online fashion. The \texttt{OSNR} is presented in Algorithm~\ref{alg:ONR}. 

\begin{algorithm}[h!]
\caption{The Online Sketched Newton-Raphson (\texttt{OSNR}) method}\label{alg:ONR}
\begin{algorithmic}[1]
\State\textbf{Initialization}: Receive $\xx_0 \in\mathbb{R}^n$ and $\tau \in \llbracket 1, n  \rrbracket$  the sketching size.
\FOR{$t=1,2,3...T$}
\State Implement the decision $\xx_t$.
\State Observe the outcome $\F_t(\xx_t)$. 
\State Sample a new sketching matrix $\S_t \sim \mathbb{D}_\tau$.
\State  Update decision: $\xx_{t+1} = \xx_t - \nabla f_{\S_t,\xx_t}(\xx_t)$.
\ENDFOR
\end{algorithmic}
\end{algorithm}

We remark that an immediate improvement to this method could be to perform multiple sketched Newton-Raphson steps at each iteration, given its fast convergence properties~\cite{sketchedNR, NewtonSketch}. This could improve regret when applied in practice. Given the expected speedup in calculation time when compared to a deterministic Newton-like step, these iterations could be made within the computational and temporal constraints. To remain closely aligned with OCO, we restrict \texttt{OSNR} to one step per time $t$. A multiple sketching step approach is a topic for future work. 

The following sections present different application scenarios for \texttt{OSNR}. 

\subsection{Online Root-finding}\label{ssec:online_zeroes}

We now apply the \texttt{OSNR} method to the online tracking of zeros problem and bound its regret.
Recall that the problem takes the form:
\begin{equation}
    \min_{\xx_t} \; \norm{\F_t(\xx_t)}, \label{eq:zeros}
\end{equation}
at each time step $t=1,2,\dots,T$. Then, using \texttt{OSNR} yields the following.

\begin{theorem}
    Under Assumptions \ref{ass:lip} and \ref{ass:quasi}, the expected regret suffered by Algorithm~\ref{alg:ONR} when solving~\eqref{eq:zeros} is bounded above by:
    \begin{equation*}
        \mathbb{E}\left[\R_\textup{0} (T)\right] \le  \frac{L_{\F} }{1-\sqrt{1-\mu}}\left(V_T + C\right) = \mathcal{O}(V_T+1),
    \end{equation*}
    where $C=\sqrt{1-\mu}\norm{\xx_0-\xx_0^*}$.
\end{theorem}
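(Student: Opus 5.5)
The plan is the standard tracking argument: reduce the regret to a sum of distances to zeros, and bound that sum with the contraction of Lemma~\ref{lem:linred} and the variation $V_T$. For each $t$ I would pick $\xx_t^*\in\mathcal{X}^0_t$ as a point of $\mathcal{X}^0_t$ closest to $\xx_t$. Since $\F_t(\xx_t^*)=\mathbf{0}$, Assumption~\ref{ass:lip} gives
\[
\norm{\F_t(\xx_t)} = \norm{\F_t(\xx_t)-\F_t(\xx_t^*)} \le L_{\F}\norm{\xx_t-\xx_t^*},
\]
hence $\mathbb{E}[\R_0(T)]\le L_{\F}\sum_{t=1}^T \mathbb{E}[\norm{\xx_t-\xx_t^*}]$. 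It therefore suffices to bound $S_T=\sum_{t=1}^T \mathbb{E}[\norm{\xx_t-\xx_t^*}]$.

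Next I would derive a one-step recursion, splitting with the triangle inequality through the previous round's zero:
\[
\norm{\xx_{t}-\xx_{t}^*} \le \norm{\xx_{t}-\xx_{t-1}^*} + \norm{\xx_{t-1}^*-\xx_{t}^*}.
\]
\begin{itemize}
\item For the first term, $\xx_t$ is obtained from $\xx_{t-1}$ by one sketched step on $\F_{t-1}$. Lemma~\ref{lem:linred} then gives $\mathbb{E}[\norm{\xx_t-\xx_{t-1}^*}]\le\sqrt{1-\mu}\,\mathbb{E}[\norm{\xx_{t-1}-\xx_{t-1}^*}]$.
\item For the second term, $\xx_{t-1}^*\in\mathcal{X}^0_{t-1}$, so $\mathrm{dist}(\xx_{t-1}^*,\mathcal{X}^0_t)\le \textrm{H}_{\text{d}}(\mathcal{X}^0_t,\mathcal{X}^0_{t-1})$.
\end{itemize}

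The first delicate point is that $\xx_t^*$ is a point of $\mathcal{X}^0_t$ closest to $\xx_t$, not the point closest to $\xx_{t-1}^*$. I would avoid this by working with distances instead of fixed points. Since $\mathrm{dist}(\cdot,\mathcal{X}^0_t)$ is 1-Lipschitz,
\[
\mathrm{dist}(\xx_t,\mathcal{X}^0_t)\le \norm{\xx_t-\xx_{t-1}^*}+\mathrm{dist}(\xx_{t-1}^*,\mathcal{X}^0_t)\le \norm{\xx_t-\xx_{t-1}^*}+\textrm{H}_{\text{d}}(\mathcal{X}^0_t,\mathcal{X}^0_{t-1}).
\]
Taking expectations and writing $e_t=\mathbb{E}[\norm{\xx_t-\xx_t^*}]$ and $v_t=\textrm{H}_{\text{d}}(\mathcal{X}^0_t,\mathcal{X}^0_{t-1})$, this yields $e_t\le \sqrt{1-\mu}\,e_{t-1}+v_t$.

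There is a second subtlety: Lemma~\ref{lem:linred} is stated for the particular zero $\xx_{t-1}^*$ in Assumption~\ref{ass:quasi}. I would take that to be the projection point used above, or just assume the zero is unique as in the singleton case. With $e_0=\norm{\xx_0-\xx_0^*}$ as the initialization term, I expect the paper to index things so that the first step contributes $\sqrt{1-\mu}\,e_0$.

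Finally I would sum the recursion over $t=1,\dots,T$:
\[
S_T\le \sqrt{1-\mu}\,(e_0+S_{T-1})+V_T\le \sqrt{1-\mu}\,S_T + \sqrt{1-\mu}\,\norm{\xx_0-\xx_0^*}+V_T.
\]
Rearranging gives $S_T\le (V_T+C)/(1-\sqrt{1-\mu})$ with $C=\sqrt{1-\mu}\norm{\xx_0-\xx_0^*}$. Multiplying by $L_{\F}$ gives the claim, and $\mathcal{O}(V_T+1)$ follows since $\mu$, $L_\F$ and $C$ are constants. The main obstacle is the bookkeeping of the second step: choosing the reference zeros consistently so that the lemma's contraction and the Hausdorff variation chain together. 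The distance-function formulation above is what I would try first.
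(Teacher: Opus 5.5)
Your proposal is correct and follows the paper's proof essentially step for step. The shared steps are the Lipschitz reduction to $\sum_{t}\mathbb{E}\norm{\xx_t-\xx_t^*}$, the triangle inequality through the previous round's zero $\xx_{t-1}^*$, the contraction of Lemma~\ref{lem:linred} with initial term $\sqrt{1-\mu}\norm{\xx_0-\xx_0^*}$, and the final rearrangement; your $S_{T-1}\le S_T$ is exactly the paper's dropping of $-\sqrt{1-\mu}\,\mathbb{E}\norm{\xx_T-\xx_T^*}$. The one difference is your distance-function treatment of non-singleton zero sets (taking $\xx_t^*$ as the projection of $\xx_t$), a small refinement that properly justifies bounding the drift terms by the Hausdorff-based $V_T$, which the paper does without comment.
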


\begin{pf}
The Lipschitz continuity from Assumption 1 leads to
\begin{align}
    \R_{\text{0}}(T) &= \sum_{t=1}^T\norm{\F_t(\xx_t)-\F_t(\xx_t^*)}\nonumber\\
    &\le \sum_{t=1}^T L_{\F_t} \norm{\xx_t-\xx_t^*} \label{eq:substi}\\
    &\le L_{\F_t} \sum_{t=1}^T\norm{\xx_t-\xx^*_{t-1}}+L_{\F_t} \sum_{t=1}^T \norm{\xx_{t-1}^*-\xx_t^*}. \nonumber
\end{align}
Taking the expectation on both sides with respect to the sketching matrices we get:
\[
\begin{array}{l}
    \mathbb{E}\left[\R_{\text{0}}(T)\right] \le L_{\F_t} \mathbb{E}\left[\displaystyle\sum_{t=1}^T\norm{\xx_t-\xx^*_{t-1}}\right] + L_{\F_t} V_T\\
    \qquad \le L_{\F_t} \displaystyle\sum_{t=0}^{T-1} \sqrt{1-\mu}\mathbb{E}\left[\norm{\xx_t-\xx_t^*}\right] + L_{\F_t} V_T\\
    \qquad = L_{\F_t} \mathbb{E}\left[\displaystyle\sum_{t=1}^{T} \sqrt{1-\mu}\norm{\xx_t-\xx_t^*}\right]  +L_{\F_t} V_T\\
    \qquad  \quad +L_{\F_t} \sqrt{1-\mu}( \norm{\xx_0-\xx_0^*}-\mathbb{E}\left[L_{\F_t} \norm{\xx_T-\xx_T^*}\right]),
\end{array}
\]
where we used Lemma~\ref{lem:linred} to obtain the second inequality.
Noting that $( \sqrt{1-\mu}\norm{\xx_0-\xx_0^*}-\mathbb{E}\left[\norm{\xx_T-\xx_T^*}\right]) \leq \sqrt{1-\mu}\norm{\xx_0-\xx_0^*}= C$, we have that:
\begin{align*}
     \mathbb{E}\left[L_{\F_t} \sum_{t=1}^T \norm{\xx_t-\xx_t^*}\right]&\le L_{\F_t} \mathbb{E}\left[\sum_{t=1}^{T} \sqrt{1-\mu}\norm{\xx_t-\xx_t^*}\right]\\
     &\qquad\qquad+L_{\F_t} V_T+L_{\F_t}  C,
\end{align*}
and thus,
    \begin{align*}
    (1-\sqrt{1-\mu})\mathbb{E}\left[L_{\F_t} \sum_{t=1}^T \norm{\xx_t-\xx_t^*}\right]&\le L_{\F_t} \left( V_T+ C \right).
\end{align*}
Substituting this in~\eqref{eq:substi} yields
\begin{align*}    
    \mathbb{E}\left[\R_{\text{0}}(T)\right]\le \frac{L_{\F_t} }{1-\sqrt{1-\mu}}\left(V_T + C\right),
\end{align*}

which results in the following regret bound:
\[ \mathbb{E}\left[\R_{\text{0}}(T)\right] \le  \frac{L_{\F} }{1-\sqrt{1-\mu}}\left(V_T + C\right) = \mathcal{O}(V_T+1),\]
where $L_\F$ is used to upper bound $L_{\F_t}$ for all $t$. \hfill $\square$

\end{pf}
The online root-finding algorithm extends the framework of OCO to a slightly different application. The regret bound implies that the cumulative error incurred is proportional to the temporal variation in roots.

\subsection{Online Convex Optimization with \texttt{OSNR}}

Now consider the unconstrained OCO problem described at time $t$ by:
\begin{align}
\label{eq:OCO}
    \min_{\xx_t}\; g_t(\xx_t),
\end{align}
where $g_t:\mathbb{R}^m\mapsto\mathbb{R}$ is a convex, twice-differentiable, and time-varying function. Letting $\mathbf{F}_t(\xx) = \nabla g_t(\xx)$, then tracking zeros of $\mathbf{F}_t(\xx)$ similarly to Section~\ref{ssec:online_zeroes} corresponds to computing first-order extrema of the function~$g_t$. For convex functions, this is equivalent to tracking global optima. We can hence apply Algorithm~\ref{alg:ONR} to OCO problems akin to~\eqref{eq:OCO} and obtained a bounded the regret. This is shown next.

\begin{corollary}
\label{cor:OCO}
Consider~\eqref{eq:OCO} and implement Algorithm~\ref{alg:ONR} with $\F_t(\xx) = \nabla g_t(\xx)$ for all time $t$. If $g_t$ is Lipchitz continuous with modulus $0< L_{g_t} < +\infty$ for all $t$ and is such that Assumption~\ref{ass:quasi} holds, then the expected regret incurred when solving \eqref{eq:OCO} is bounded by:
    \[
    \mathbb{E}\left[\R(T)\right] \le L_{g} \frac{V_T + \widehat{C}}{1-\sqrt{1-\mu}},
    \]
    where $\displaystyle L_{g} = \max_{t=1,2,\ldots,T} L_{g_t}$ and $\widehat{C}=\norm{\xx_1-\xx_1^*}$.
\end{corollary}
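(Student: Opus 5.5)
The plan is to reduce Corollary~\ref{cor:OCO} to the argument behind the online root-finding theorem, with the Lipschitz continuity of $\F_t$ replaced by that of $g_t$. With $\F_t=\nabla g_t$, the set of zeros $\mathcal{X}^0_t$ is exactly the set of first-order stationary points of $g_t$. By convexity these are the global minimizers, so every $\xx_t^*\in\mathcal{X}^0_t$ is round-optimal for~\eqref{eq:OCO}. Because $\xx_t^*$ is a zero of $\F_t$, Assumption~\ref{ass:quasi} applies verbatim, and Lemma~\ref{lem:linred} gives
\[
\mathbb{E}\left[\norm{\xx_{t+1}-\xx_t^*}\right]\le\sqrt{1-\mu}\,\mathbb{E}\left[\norm{\xx_t-\xx_t^*}\right]
\]
for the iterates of Algorithm~\ref{alg:ONR}.

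\textbf{Step 1: bound the regret by the tracking error.} Lipschitz continuity of $g_t$ with modulus $L_{g_t}$ gives
\[
g_t(\xx_t)-g_t(\xx_t^*)\le L_{g_t}\norm{\xx_t-\xx_t^*}\le L_g\norm{\xx_t-\xx_t^*}.
\]
Hence $\R(T)\le L_g\sum_{t=1}^T\norm{\xx_t-\xx_t^*}$, which plays the role of~\eqref{eq:substi}. From this point the argument no longer involves $g_t$ and depends only on the contraction and on $V_T$.

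\textbf{Step 2: split with the triangle inequality.} Write $\norm{\xx_t-\xx_t^*}\le\norm{\xx_t-\xx_{t-1}^*}+\norm{\xx_{t-1}^*-\xx_t^*}$ for $t\ge2$, and keep the $t=1$ term $\norm{\xx_1-\xx_1^*}=\widehat{C}$ separate. The second pieces sum to at most $V_T$. Here the minimizers $\xx_t^*$ should be chosen as nearest points, so that consecutive distances are bounded by the Hausdorff distances. Lemma~\ref{lem:linred} bounds each first piece in expectation by $\sqrt{1-\mu}\,\mathbb{E}\norm{\xx_{t-1}-\xx_{t-1}^*}$. Writing $S=\sum_{t=1}^T\mathbb{E}\norm{\xx_t-\xx_t^*}$, this gives
\[
S\le\widehat{C}+\sqrt{1-\mu}\,S+V_T,
\]
after dropping the nonpositive term $-\sqrt{1-\mu}\,\mathbb{E}\norm{\xx_T-\xx_T^*}$. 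Rearranging,
\[
S\le\frac{V_T+\widehat{C}}{1-\sqrt{1-\mu}},
\]
and multiplying by $L_g$ yields the claim.

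\textbf{Main obstacle.} The delicate part is the bookkeeping of the initial term, which changes the constant from $C$ to $\widehat{C}$. Starting the sum at $t=1$ and isolating $\norm{\xx_1-\xx_1^*}$, instead of shifting indices back to $\xx_0$, gives exactly $\widehat{C}$. A second point needs care. When minimizers are non-unique, the contraction in Lemma~\ref{lem:linred} must hold for the specific minimizer $\xx_t^*$ used in the telescoping, and the variation term must be controlled through $\textrm{H}_{\text{d}}$. I would handle this by selecting $\xx_t^*$ as the projection of $\xx_t$ onto $\mathcal{X}^0_t$, noting that Lemma~\ref{lem:linred} holds for any zero. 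Finally, conditional expectations are chained through the tower property, since $\S_t$ is independent of $\xx_t$.
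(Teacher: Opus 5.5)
Your proposal is correct and follows essentially the same route as the paper's proof: the Lipschitz bound $\R(T)\le L_g\sum_{t}\norm{\xx_t-\xx_t^*}$, then the triangle-inequality split through $\xx_{t-1}^*$ with the $t=1$ term $\widehat{C}$ kept separate, then Lemma~\ref{lem:linred} on the shifted terms, and finally rearranging after discarding $-\sqrt{1-\mu}\,\mathbb{E}\left[\norm{\xx_T-\xx_T^*}\right]$. Your extra care about non-unique minimizers is harmless but unnecessary: applying Assumption~\ref{ass:quasi} at any other zero $\bar{\xx}$ of $\F_t$ gives $0\ge\frac{\mu_t}{2}\norm{\xx_t^*-\bar{\xx}}^2$, so $\mathcal{X}^0_t$ is a singleton and $V_T$ reduces to $\sum_{t=1}^T\norm{\xx_t^*-\xx_{t-1}^*}$.
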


\begin{pf}
    Applying the dynamic regret definition to Problem \eqref{eq:OCO}, we get:
\[\R(T) = \sum_{t=1}^T g_t(\xx_t)-g_t(\xx_t^*).\]

The Lipschitz continuity of $g_t$ leads to:
\begin{equation*}
\begin{array}{l}
   \mathrm{R}(T) \le \displaystyle \sum_{t=1}^T L_{g_t}\norm{\xx_t-\xx_t^*} \\
   \qquad \le L_{g}\left[\displaystyle\sum_{t=2}^T \norm{\xx_t-\xx_{t-1}^*+\xx_{t-1}^*-\xx_t^*} + \norm{\xx_1-\xx_1^*}\right],
\end{array}
\end{equation*}

where we bound $L_{g_t}$ with $L_{g}$ to factor it from the sum. Thus,
\begin{equation}
\label{eq:regret_and_xtstar:1}
  \text{R}(T)\le L_{g}\displaystyle \sum_{t=2}^T\norm{\xx_t-\xx_{t-1}^*}+L_{g}V_T +L_{g}\norm{\xx_1-\xx_1^*}.
\end{equation}
Taking the expectation on both sides and using Lemma~\ref{lem:linred}, we obtain:
\begin{equation*}
\begin{array}{l}
\mathbb{E}\left[\displaystyle\sum_{t=1}^T \norm{\xx_t-\xx_t^*}\right] \\
    \qquad\le \mathbb{E}\left[ \displaystyle\sum_{t=2}^T \norm{\xx_t-\xx_{t-1}^*}\right] + V_T + \norm{\xx_1-\xx_1^*}\\
       \qquad =  \displaystyle\sum_{t=2}^T \mathbb{E}\left[\norm{\xx_t-\xx_{t-1}^*}\right]  + V_T + \norm{\xx_1-\xx_1^*} \\
       \qquad\le\displaystyle\sum_{t=2}^T \sqrt{1-\mu}\mathbb{E}\left[\norm{\xx_{t-1}-\xx_{t-1}^*}\right] + V_T + \norm{\xx_1-\xx_1^*}
       \end{array}
\end{equation*}
\begin{equation*}
\begin{array}{l}    
  \qquad= \sqrt{1-\mu}\mathbb{E}\left[ \displaystyle \sum_{t=1}^{T-1} \norm{\xx_{t}-\xx_{t}^*}\right]+ V_T +\norm{\xx_1-\xx_1^*}.
\end{array}
\end{equation*}
 Hence, we have
\begin{equation}\label{eq:inter}
\begin{aligned}
    \left(1-\sqrt{1-\mu}\right) \mathbb{E}\left[\sum_{t=1}^T \norm{\xx_t-\xx_t^*}\right] &\leq V_T \\
    &\hspace{-3.5cm}+ \norm{\xx_1-\xx_1^*} -\sqrt{1-\mu}\mathbb{E}\left[\norm{\xx_T-\xx^*_T}\right]. 
\end{aligned}
\end{equation}
Substituting~\eqref{eq:inter} in the right-hand side of~\eqref{eq:regret_and_xtstar:1} taken in the expected sense, we finally bound $\mathbb{E}\left[\text{R}_d(T)\right]$. Defining $\widehat{C}=\norm{\xx_1-\xx_1^*} \geq \norm{\xx_1-\xx_1^*} -\sqrt{1-\mu}\mathbb{E}\left[\norm{\xx_T-\xx^*_T}\right]$, we obtain:
\begin{align}
    \mathbb{E}\left[\text{R}(T)\right] &\le L_{g} \frac{V_T + \widehat{C}}{1-\sqrt{1-\mu}},
\end{align}
which concludes the proof. \hfill $\square$
\end{pf}

We further extend this result to the equality-constrained case in the next section before discussing both Corollary~\ref{cor:OCO} and the next section's Corollary~\ref{cor:OCO=}.

\subsection{Time-varying Linear Equality-Constrained OCO}

Next, we add time-varying linear equality constraints to the Problem~\eqref{eq:OCO} and show that \texttt{OSNR} can be employed to solve the problem. Consider the affine equality-constrained online optimization problem:
\begin{subequations}\label{eq:consOCO}
\begin{align}
\min_{\xx_t}\; &g_t(\xx_t) \label{eq:consOCO_ojb}\\
    \text{s.t.}\;&\AA \xx_t = \bb_t, \label{eq:consOCO_const}
\end{align}
\end{subequations}
where $\AA \in \mathbb{R}^{k\times n}$ and $k\in \mathbb{N}$ are constant parameters, $\bb_t\in \mathbb{R}^{k}$ is an online parameter observed similarly to~$g_t$. With the inclusion of a time-varying equality constraints, we define the cumulative constraint violations:
\begin{equation}\label{eq:vio}
\text{Vio}(T) = \sum_{t=1}^T \norm{\AA \xx_t - \bb_t}.    
\end{equation}
This definition is akin to the regret but for online equality constraints satisfaction. It can be further extended to account for online inequality constraints when needed~\cite{yi2022regret}.

Problem~\eqref{eq:consOCO} can be solved as an unconstrained optimization problem akin to~\eqref{eq:OCO} by always starting an update at a feasible point and then limiting the search directions to those that are in the nullspace of the constraint matrix $\AA$. 
Specifically, consider (i) a matrix with orthonormal columns $\MM\in\mathbb{R}^{n \times (n-k)}$ such that $\AA\MM\zz =\mathbf{0}$ for all $\zz \in \mathbb{R}^{n-k}$ and (ii) a point $\tilde{\xx}_t$ such that $\AA\tilde{\xx}_t=\bb_t$. Then, we can equivalently solve Problem~\eqref{eq:consOCO} as:
\begin{equation}
\label{eq:reduced}
    \min_{\zz_t}\; g_t(\MM\zz_t+\tilde{\xx}_t).
\end{equation}
The optimal solution to the original problem is recovered from $\zz^*_t \in \argmin_{\zz_t} g_t(\MM\zz_t+\tilde{\xx}_t)$ via $\xx_t^* = \MM\zz^*_t+\tilde{\xx}_t$ \cite{boyd}. \texttt{OSNR} can be modified and applied to the reduced problem~\eqref{eq:reduced} using $\FF_t(\zz_t) =\MM^\top\nabla g_t(\MM \zz_t+\tilde{\xx}_t)$. Following a sketched Newton-Raphson step, the decision $\xx_{t+1}$ will not be feasible with respect to the newly observed constraint $\mathbf{b}_t$. 

A projection step onto the linear constraint hyperplane must first be completed, yielding the intermediary point $\tilde{\xx}_t$ as the update starts. The extended algorithm is presented in Algorithm~\ref{alg:ONRcons} as the Online sketched Newton-Raphson method with equality constraints (\texttt{OSNR-EC}). We note that we have purposely kept~$\mathbf{z}$ on Line 7 even if it is evaluated at~$\mathbf{0}$ to emphasize that the structure of the update.

\begin{algorithm}[tb]
\caption{Online sketched Newton-Raphson Method with Equality Constraints (\texttt{OSNR-EC})}\label{alg:ONRcons}
\begin{algorithmic}[1]
\State\textbf{Initialization}: Receive $\xx_0 \in\mathbb{R}^n, \AA\in\mathbb{R}^{k\times n}, \MM\in\mathbb{R}^{(n-k)\times n}$, and $\tau \in \llbracket 1, n  \rrbracket$  the sketching size.
\FOR{$t=1,2,3,...,T$}
\State Implement the decision $\xx_t$.
\State Observe the outcome $g_t(\xx_t)$ and parameter $\bb_t$. 
\State Sample a new sketching matrix $\S_t \sim \mathbb{D}_\tau$.
\State Project: $\tilde{\xx}_{t+1} = \xx_t + \AA^\top(\AA\AA^\top)^{-1}(\bb_{t}-\bb_{t-1})$.

\State  Update decision: 
\begin{align*}
\Delta \zz_t &= - \nabla f^{\S_t,\zz}_t(\MM\zz+\tilde{\xx}_{t+1})|_{\zz=\mathbf{0}},\\
\xx_{t+1} &= \tilde{\xx}_{t+1} + \MM \Delta\zz_{t}.
\end{align*}
\ENDFOR
\end{algorithmic}
\end{algorithm}

To establish a regret and constraint bound for Algorithm~\ref{alg:ONRcons}, we first provide two lemmas. First, we note that the projection step does not increase the distance from the optimal solution.
\begin{lem}\cite[ Theorem 3]{OPENM}
\label{lem:proj}
Let $\xx_t \in \mathbb{R}^n$ and consider $\tilde{\xx}_{t+1} = \proj_{\mathbf{A}\xx = \bb_t} \xx_t$. Then,
    \begin{align*}
        \norm{\tilde{\xx}_{t+1}-\xx_t^*} \le \norm{\xx_t-\xx_t^*}.
    \end{align*}
\end{lem}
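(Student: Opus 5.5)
The plan is to use the standard non-expansiveness of orthogonal projection onto a closed convex set. The constraint set $\mathcal{A}_t = \{\xx \in \mathbb{R}^n \, | \, \AA\xx = \bb_t\}$ is an affine subspace, hence closed and convex. The round-optimal decision of Problem~\eqref{eq:consOCO} is feasible, so $\xx_t^* \in \mathcal{A}_t$. The claim therefore reduces to showing that $\norm{\proj_{\mathcal{A}_t}\xx - \yy} \le \norm{\xx - \yy}$ for any $\xx \in \mathbb{R}^n$ and any $\yy \in \mathcal{A}_t$. Taking $\xx = \xx_t$ and $\yy = \xx_t^*$ then gives the statement.

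For the affine case I would argue through an explicit Pythagorean decomposition rather than the general variational inequality. Write $\tilde{\xx}_{t+1} = \xx_t - \AA^\top(\AA\AA^\top)^{-1}(\AA\xx_t - \bb_t)$, assuming $\AA$ has full row rank so that the explicit formula is well defined. The displacement $\xx_t - \tilde{\xx}_{t+1}$ lies in the range of $\AA^\top$. The difference $\tilde{\xx}_{t+1} - \xx_t^*$ lies in the nullspace of $\AA$, because both points satisfy $\AA\xx = \bb_t$. These two subspaces are orthogonal complements, so
\[
\norm{\xx_t - \xx_t^*}^2 = \norm{\xx_t - \tilde{\xx}_{t+1}}^2 + \norm{\tilde{\xx}_{t+1} - \xx_t^*}^2 \ge \norm{\tilde{\xx}_{t+1} - \xx_t^*}^2.
\]
Taking square roots yields the inequality.

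The only delicate point is bookkeeping. In Algorithm~\ref{alg:ONRcons} the projection is implemented incrementally as $\xx_t + \AA^\top(\AA\AA^\top)^{-1}(\bb_t - \bb_{t-1})$. This coincides with the orthogonal projection onto $\mathcal{A}_t$ only when $\AA\xx_t = \bb_{t-1}$. I would check this by induction: the update adds $\MM\Delta\zz_t$ with $\AA\MM = \mathbf{0}$, so feasibility with respect to the previous right-hand side is preserved. This requires the initial point to be feasible, which I would state as the convention. The lemma itself, however, is phrased directly with $\proj_{\AA\xx=\bb_t}$, so the orthogonality argument above suffices for it.
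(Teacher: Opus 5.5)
Your proof is correct and is essentially the argument behind the cited result. The paper gives no proof of its own and imports the lemma from \cite[Theorem 3]{OPENM}; the standard justification is that $\xx_t^*$ is feasible, hence fixed by the projection, so nonexpansiveness of projection onto a closed convex set gives the claim, and your orthogonal decomposition is that argument specialized to an affine set, even yielding the exact identity $\norm{\xx_t-\xx_t^*}^2=\norm{\xx_t-\tilde{\xx}_{t+1}}^2+\norm{\tilde{\xx}_{t+1}-\xx_t^*}^2$ (and your bookkeeping remark is accurate: Line~6 of Algorithm~\ref{alg:ONRcons} equals $\proj_{\AA\xx=\bb_t}\xx_t$ only when $\AA\xx_t=\bb_{t-1}$, which requires a feasible initial point that the paper assumes implicitly but never states).
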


Using this property, we can then establish the expected improvement of an \texttt{OSNR-EC} step.

\begin{lem}
\label{lem:eqred}
Consider Problem~\eqref{eq:consOCO} and implement an update of Algorithm~\ref{alg:ONRcons} with $\FF_t(\zz_t)=g_t(\MM\zz_t+\tilde{\xx}_t)$ for all~$t$ such that Assumption~\ref{ass:quasi} holds. Let there be a unitary matrix $\MM$ such that $\AA\MM\zz=0$, $\forall \zz\in \mathbb{R}^{n-k}$. Then the following holds:
    \begin{equation*}
\mathbb{E}\left[\norm{\xx_{t+1}-\xx_t^*}\right] \le \sqrt{1-\mu} \mathbb{E}\left[\norm{\xx_t-\xx_t^*}\right].
\end{equation*}
\end{lem}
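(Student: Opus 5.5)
The plan is to reduce the constrained update of Algorithm~\ref{alg:ONRcons} to the unconstrained sketched Newton-Raphson step analyzed in Lemma~\ref{lem:linred}, and then to absorb the projection step with Lemma~\ref{lem:proj}. Two facts about $\MM$ drive the argument: it has orthonormal columns, so $\MM^\top\MM = \mathbf{I}$ and $\norm{\MM\zz}=\norm{\zz}$; and $\MM$ spans the nullspace of $\AA$.

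\textbf{Step 1 (reduced coordinates).} Since $\tilde{\xx}_{t+1}$ and $\xx_t^*$ both satisfy $\AA\xx=\bb_t$, their difference lies in the nullspace of $\AA$. Hence there is $\zz_t^*$ with $\xx_t^* = \tilde{\xx}_{t+1} + \MM\zz_t^*$. By the equivalence between \eqref{eq:consOCO} and \eqref{eq:reduced}, $\zz_t^*$ is a zero of the reduced map $\FF_t(\zz)=\MM^\top\nabla g_t(\MM\zz+\tilde{\xx}_{t+1})$. The update reads $\xx_{t+1} = \tilde{\xx}_{t+1}+\MM\Delta\zz_t$, where $\Delta\zz_t$ is exactly one sketched Newton-Raphson step \eqref{eq:skns} taken from the initial point $\zz=\mathbf{0}$ on $\FF_t$. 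This gives
\[\norm{\xx_{t+1}-\xx_t^*} = \norm{\MM(\Delta\zz_t - \zz_t^*)} = \norm{(\mathbf{0}+\Delta\zz_t)-\zz_t^*}.\]

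\textbf{Step 2 (contraction in $\zz$).} Assumption~\ref{ass:quasi} is imposed on the reduced function. The computation in the proof of Lemma~\ref{lem:linred}, run conditionally on $\tilde{\xx}_{t+1}$ with expectation over $\S_t$ only, therefore gives
\[\mathbb{E}_{\S_t}\big[\norm{\Delta\zz_t-\zz_t^*}\big]\le\sqrt{1-\mu}\,\norm{\mathbf{0}-\zz_t^*}.\]
That computation uses the expansion of the square, identity \eqref{gradIdent}, strong quasar-convexity, $f(\zz_t^*)=0$, and Jensen. Because $\MM$ has orthonormal columns, $\norm{\zz_t^*}=\norm{\MM\zz_t^*}=\norm{\tilde{\xx}_{t+1}-\xx_t^*}$.

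\textbf{Step 3 (projection).} Lemma~\ref{lem:proj} gives $\norm{\tilde{\xx}_{t+1}-\xx_t^*}\le\norm{\xx_t-\xx_t^*}$. This holds provided $\tilde{\xx}_{t+1}$ is indeed the Euclidean projection of $\xx_t$ onto $\{\AA\xx=\bb_t\}$. The formula on Line~6 gives this when $\AA\xx_t=\bb_{t-1}$, which holds inductively, since each update moves only along $\MM$, assuming the initialization is feasible. Chaining Steps 1--3 and taking total expectation via the tower property over $\S_1,\dots,\S_{t-1}$ yields the claim.

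\textbf{Main obstacle.} The delicate point is Step 2. I must check that the reduced objective's sketched quantities $f^{\S,\zz}_t$ are built from the reduced Jacobian $\MM^\top\nabla^2 g_t\MM$, so that \eqref{gradIdent} still holds. I must also check that the quasar-convexity assumption is stated at the base point $\zz=\mathbf{0}$ for the translated problem that depends on $\tilde{\xx}_{t+1}$. I would handle this by interpreting Assumption~\ref{ass:quasi} as holding for every such reduced $\FF_t$, uniformly in the feasible offset, with the same $\mu$.
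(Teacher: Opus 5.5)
Your proposal is correct and follows the paper's proof: write $\xx_{t+1}-\xx_t^*=\MM(\Delta\zz_t-\zz_t^*)$, use that $\MM$ is an isometry, apply Lemma~\ref{lem:linred} to the reduced map from $\zz=\mathbf{0}$, and finish with Lemma~\ref{lem:proj}. The only difference is that you impose Assumption~\ref{ass:quasi} directly on the reduced map, which is what the statement literally says and is the cleaner reading, whereas the paper tries to inherit it from the unreduced $\nabla g_t$ by a restriction-to-a-subspace argument; your worry about uniformity in the offset resolves itself, since replacing $\tilde{\xx}_{t+1}$ by $\tilde{\xx}_{t+1}+\MM\mathbf{w}$ only translates the reduced map by $\mathbf{w}$.
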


\begin{pf} Let $\bm{\Delta}\mathbf{z}^*_t \in \mathbb{R}^{n-k}$ be such that $\mathbf{x}_t^* = \tilde{\xx}_t + \mathbf{M}\bm{\Delta}\mathbf{z}^*_t$, where $\mathbf{x}_t^*$ is an optimum of~\eqref{eq:consOCO}.
Re-expressing the difference between the computed point and the optimal solution we have:
\begin{align*}
    \norm{\xx_{t+1}-\xx_t^*} &= \norm{\MM\Delta\zz_t+\tilde{\xx}_{t+1}-\MM\Delta \zz_t^* -\tilde{\xx}_{t+1}}\\
    &= \norm{\MM(\Delta\zz_t-\Delta\zz_t^*)}\\
    &=\norm{\Delta\zz_t-\Delta\zz_t^*},
\end{align*}
where we obtained the last equality from the unitary matrix $\MM$ satisfying $\norm{\MM}=1$.
In particular, this is true in the expected sense meaning $\mathbb{E}\left[\norm{\xx_{t_1}-\xx_t^*}\right]=\mathbb{E}\left[\norm{\Delta \zz_t-\Delta\zz_t^*}\right]$. If the original function $\mathbf{F}_t(\xx)=\nabla g_t(\xx_t)$ satisfies Assumption \ref{ass:quasi}, then the reduced function $\mathbf{F}^{\text{red}}_t(\zz)=\MM^\top\nabla g_t(\MM\zz+\tilde{\xx}_t)$ also satisfies Assumption~\ref{ass:quasi} because the restriction of a quasi-convex function to a linear subspace conserves the quasi-convexity~\cite{renegar}. Consequently, Lemma~\ref{lem:linred} holds for $\mathbf{F}^{\text{red}}_t(\zz)$. Let $\zz^*_t$ be such that $\xx_t^* = \MM\zz_t^*$. Let $\zz^+$ denote the point resulting from a sketched Newton-Raphson step taken at $\zz$. Notice that under Algorithm~\ref{alg:ONRcons}, $\zz$ is always set to $\mathbf{0}$. Therefore, a sketched Newton step from this point respects:
\begin{align}
\label{eq:zRed}
    \mathbb{E}\left[\norm{\zz^+_t-\zz_t^*}\right]\le \sqrt{1-\mu}
    \mathbb{E}\left[\norm{\zz-\zz_t^*}\right]
\end{align}
Let $\Delta \zz_t=\zz^+_t-\zz$ and $\Delta \zz_t^*=\zz^*_t-\zz$. We notice that $\Delta \zz_t = \zz^+_t$ and $\Delta \zz_t^* = \zz_t^*$ because $\zz=0$ by definition. We can therefore re-express \eqref{eq:zRed} in the following way:

\begin{align}
\mathbb{E}\left[\norm{\Delta\zz_t-\Delta\zz_t^*}\right] &\le \sqrt{1-\mu}\mathbb{E}\left[\norm{\Delta\zz_t^*}\right]\nonumber\\
    &=\sqrt{1-\mu}\mathbb{E}\left[\norm{\MM\Delta\zz^*_t}\right]\nonumber\\
    &= \sqrt{1-\mu}\mathbb{E}\left[\norm{\MM\Delta\zz^*_t+\tilde{\xx}_{t+1}-\tilde{\xx}_{t+1}}\right]\nonumber\\
    &= \sqrt{1-\mu}\mathbb{E}\left[\norm{\tilde{\xx}_{t+1}-\xx_t^*}\right].\label{eq:projRed}
\end{align}
Using Lemma~\ref{lem:proj}, we can further bound \eqref{eq:projRed} by:
\begin{align*}
\mathbb{E}\left[\norm{\tilde{\xx}_{t+1}-\xx_t^*}\right]\le \mathbb{E}\left[\norm{\xx_t-\xx_t^*}\right].
\end{align*}
This concludes the proof.
\hfill $\square$
\end{pf}

Using Lemma \ref{lem:eqred}, we can now bound the expected regret and constraint violation of Algorithm \ref{alg:ONRcons}.

\begin{corollary}
\label{cor:OCO=}
    Consider Problem~\eqref{eq:consOCO} and implement Algorithm~\ref{alg:ONRcons} with $\FF_t(\zz_t)=g_t(\MM\zz_t+\tilde{\xx}_t)$ for all $t$, and a unitary matrix $\MM$ such that $\AA\MM\zz=0$, $\forall \zz\in \mathbb{R}^{n-k}$. If~$g_t$ is Lipschitz continuous with modulus $0< L_{g_t} < +\infty$, the expected regret suffered by Algorithm~\ref{alg:ONRcons} is bounded by:
\begin{align*}
    \mathbb{E}\left[\R(T)\right] &\le L_{g} \frac{V_T + \widehat{C}}{1-\sqrt{1-\mu}},
\end{align*}
where $\widehat{C} = \norm{\xx_1-\xx_1^*}$ and $\displaystyle L_{g} = \max_{t=1,2,\ldots,T} L_{g_t}$, and the constraint violation suffered is exactly described by:
 \begin{align*}
        \textup{Vio}(T) &=V_b(T),
    \end{align*}
    where $V_b(T) = \sum_{t=1}^T \norm{\mathbf{b}_t-\mathbf{b}_{t-1}}$ is the cumulative variation in online parameter $\mathbf{b}_t$.
\end{corollary}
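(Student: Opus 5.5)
The regret bound will follow the template of Corollary~\ref{cor:OCO}, with Lemma~\ref{lem:eqred} replacing Lemma~\ref{lem:linred}. The constraint violation will come from a direct telescoping computation with the projection step of Algorithm~\ref{alg:ONRcons}.

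\emph{Regret.} By Lipschitz continuity of $g_t$,
\[
\R(T)\le L_g\sum_{t=1}^T\norm{\xx_t-\xx_t^*}.
\]
For $t\ge 2$, the triangle inequality gives $\norm{\xx_t-\xx_t^*}\le\norm{\xx_t-\xx_{t-1}^*}+\norm{\xx_{t-1}^*-\xx_t^*}$. The second terms sum to at most $V_T$ (the Hausdorff distance specialised to the round-optimal points). Taking expectations, Lemma~\ref{lem:eqred} applied at step $t-1$ gives $\mathbb{E}\norm{\xx_t-\xx_{t-1}^*}\le\sqrt{1-\mu}\,\mathbb{E}\norm{\xx_{t-1}-\xx_{t-1}^*}$. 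Writing $S=\mathbb{E}\sum_{t=1}^T\norm{\xx_t-\xx_t^*}$, this yields
\[
S\le\sqrt{1-\mu}\,(S-\mathbb{E}\norm{\xx_T-\xx_T^*})+V_T+\norm{\xx_1-\xx_1^*}.
\]
Dropping the nonpositive term and rearranging gives $(1-\sqrt{1-\mu})S\le V_T+\widehat C$. Multiplying by $L_g$ gives the stated bound.

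\emph{Constraint violation.} I will show by induction that $\AA\xx_{t}=\bb_{t-1}$ for all $t$, with the convention $\AA\xx_1=\bb_0$ fixed by the initialization.
\begin{itemize}
\item The correction step satisfies $\AA\MM\Delta\zz_t=\mathbf{0}$, so $\AA\xx_{t+1}=\AA\tilde\xx_{t+1}$.
\item The projection line gives $\AA\tilde\xx_{t+1}=\AA\xx_t+(\bb_t-\bb_{t-1})$.
\end{itemize}
Hence $\AA\xx_t=\bb_{t-1}$ implies $\AA\xx_{t+1}=\bb_t$, which closes the induction. Consequently $\norm{\AA\xx_t-\bb_t}=\norm{\bb_{t-1}-\bb_t}$ for every $t$. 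Summing gives $\mathrm{Vio}(T)=V_b(T)$ exactly and deterministically, since no sketch randomness enters $\AA\xx_t$.

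\emph{Main obstacle.} The delicate step is the indexing and initialization convention. The equality $\mathrm{Vio}(T)=V_b(T)$ holds only if $\xx_1$ (or $\xx_0$) is feasible for $\bb_0$, so I will state that convention explicitly. Otherwise an extra term $\norm{\AA\xx_1-\bb_0}$ appears, and the result becomes an upper bound rather than an equality. A second point to check is that Lemma~\ref{lem:eqred} is stated relative to $\xx_t^*$ while the iterate is compared to $\xx_{t-1}^*$ one step later. I will verify that the time indices in the telescoping argument match the lemma exactly as used in Corollary~\ref{cor:OCO}.
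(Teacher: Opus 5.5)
Your proposal is correct and follows the paper's proof. The regret bound is Corollary~\ref{cor:OCO}'s telescoping argument with Lemma~\ref{lem:eqred} in place of Lemma~\ref{lem:linred}, and the violation identity comes from $\AA\xx_t=\bb_{t-1}$, which you justify by explicit induction under the feasibility convention $\AA\xx_1=\bb_0$ that the paper leaves implicit. One small correction to your side remark: since $\AA\xx_{t+1}-\bb_t=\AA\xx_t-\bb_{t-1}$, any initial offset persists at every round rather than adding a single term, and Line~6 then stops being a projection so Lemma~\ref{lem:eqred} no longer applies, which makes the convention essential for both the regret and the violation parts.
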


\begin{pf}
From the definition of regret we get:
\[\R(T) = \sum_{t=1}^T g_t(\xx_t)-g_t(\xx_t^*).\]
The Lipschitz continuity of $g_t$ leads to:
\begin{align*}
    \R(T) &\le \sum_{t=1}^T L_{g_t}\norm{\xx_t-\xx_t^*} ,
\end{align*}
and equivalently,
\begin{align*}
    \mathbb{E}\left[\text{R}(T)\right]&= L_{g}\sum_{t=1}^T \mathbb{E}\left[\norm{\xx_t-\xx_t^*}\right].
\end{align*}
Using Corollary~\ref{cor:OCO}'s proof technique with Lemma~\ref{lem:eqred} instead of Lemma~\ref{lem:linred}, the regret is upper bounded by:
\begin{align}
    \mathbb{E}\left[\text{R}(T)\right] &\le L_{g} \frac{V_T + \widehat{C}}{1-\sqrt{1-\mu}},
\end{align}
with $\widehat{C}=\norm{\xx_1-\xx_1^*}$ and $\displaystyle L_{g} = \max_{t=1,2,\ldots,T} L_{g_t}$.

As for constraint violation, we recall the definition~\eqref{eq:vio}.

By construction, from the update's projection step (Line~6), we have that $\AA\xx_t = \bb_{t-1}$. Therefore, the round constraint violation is bounded by the change in online parameter $\bb_t$, leading to
    \begin{align*}
        \text{Vio}(T) &= \sum_{t=1}^T \norm{\bb_{t-1}-\bb_t} = V_b(T),
    \end{align*}
   which concludes the proof. \hfill $\square$
\end{pf}

\texttt{OSNR-EC} achieves $\mathcal{O}(V_T+1)$ bounds on both the regret similarly to \texttt{OSNR} and the constraint violation when applied to solving time-varying linear equality-constrained OCO problems. This indicates that under sublinear variation in optima, \texttt{OSNR} and \texttt{OSNR-EC} implement, in the expected sense, optimal, feasible decisions on average when the time horizon grows. The expected regret bound matches the tightest in the literature established in~\cite{OPENM, OIPM} for second-order methods
while permitting the use of a sub-sampled second-order matrix term. Additionally, \texttt{OSNR} has the same $\mathcal{O}(V_T+1)$ regret bound and computational complexity as a first-order method for strongly convex problem~\cite{SigmaOGD} due to the matrix multiplications. The latter significantly reduces the computational load when the matrix inversion/system of equations is done to perform the update.


\section{Numerical Examples}\label{sec:eg}
We next illustrate \texttt{OSNR} for 
OCO followed by its equality-constrained extension, \texttt{OSNR-EC}, in numerical examples.


\subsection{Target tracking}\label{ssec:target}
We apply \texttt{OSNR} to the
the localization of a moving target. We consider a target at a location $\yy_t\in \mathbb{R}^n$ at time $t$ and $m$ sensors. Each sensor $i=1,2,...,m$ is located at point $\mathbf{a}_i\in \mathbb{R}^n$. Suppose each sensor can measure the distance $d_i > 0$ between itself and the target $\yy_t$ as:
\begin{align*}
    d_i = \norm{\yy_t-\mathbf{a}_i},
\end{align*}
the online target tracking problem, at each time $t=1,2,...,T$, can be expressed as the following nonlinear, least-squares problem:
\begin{align}
\min_{\xx_t}\; \sum_{i=1}^m (\norm{\xx_t-\mathbf{a}_i}-d_i)^2.
\end{align}
In this example, we set $n=200$ and $m=180$. The locations of the sensors are fixed as $a_i = 20\mathcal{N}(\mathbf{0},\mathbf{I})$, where $\mathcal{N}(\mathbf{0},\mathbf{I})$ is the standard multivariate normal distribution. The time dynamics for the target are described by: $\yy_{t+1} = \yy_t+\mathbf{v}_t,$ where $\mathbf{v}_t$, the variation between rounds, is sampled as:
$\mathbf{v}_t = \frac{20\mathrm{N}(\mathbf{0},\mathbf{I})}{\sqrt{t}}$.

Multiple sketch sizes $\tau$ are utilized for the problem. The sketch sizes are re-expressed as the percentage of the total dimension (number of variables) of the problem, e.g., a sketch percentage of $\rho = 20\%$ means $\tau = \lfloor \rho n\rfloor = \lfloor 0.2 \cdot n\rfloor$. For each sketch percentage, $100$ executions of a time horizon of $T=1000$ steps are performed. The average regret for each is then calculated. As a comparison, the online gradient descent (\texttt{OGD}) algorithm is also implemented~\cite{zinkevich2003online} with stepsize set to $\eta=\frac{1}{15\sqrt{T}}$.

 \begin{figure}[htbp]
  \centering
  \includegraphics[width=0.9\textwidth]{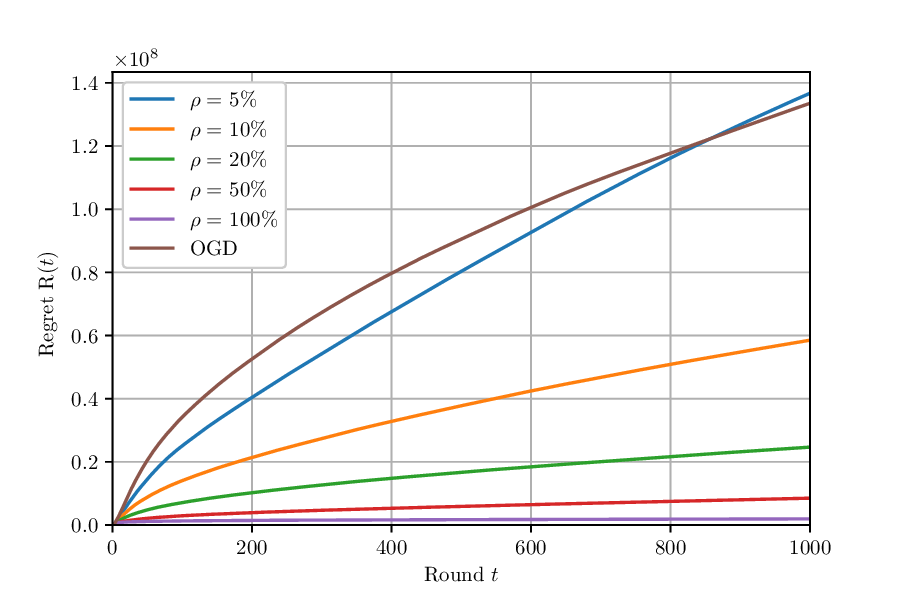} 
 \caption{Average regret for the online target tracking problem with \texttt{OSNR}.}
 \label{regPoints}
 \end{figure}

The experimental average regret for online tracking using~\texttt{OSNR} is presented in Figure~\ref{regPoints} for different sketch percentages. We observe that~\texttt{OSNR} outperforms \texttt{OGD} for all $\rho$. For a $\rho=5\%$-sketch percentage, the performance is similar, but ultimately better for~\texttt{OSNR}. A sketch percentage of 100\% results in using the entire Hessian matrix at every round. At this time, the update coincides with the online Newton method (\texttt{ONM}) from~\cite{OnlineNewton}. The average executing time of \texttt{OSNR}'s update is presented Figure~\ref{fig:timePoints} together with \texttt{OGD}'s. It illustrates that the computation time becomes similar to a first-order method at low sketch percentage while yielding improved regret. 


 \begin{figure}[htbp]
  \centering
  \includegraphics[width=0.85\textwidth]{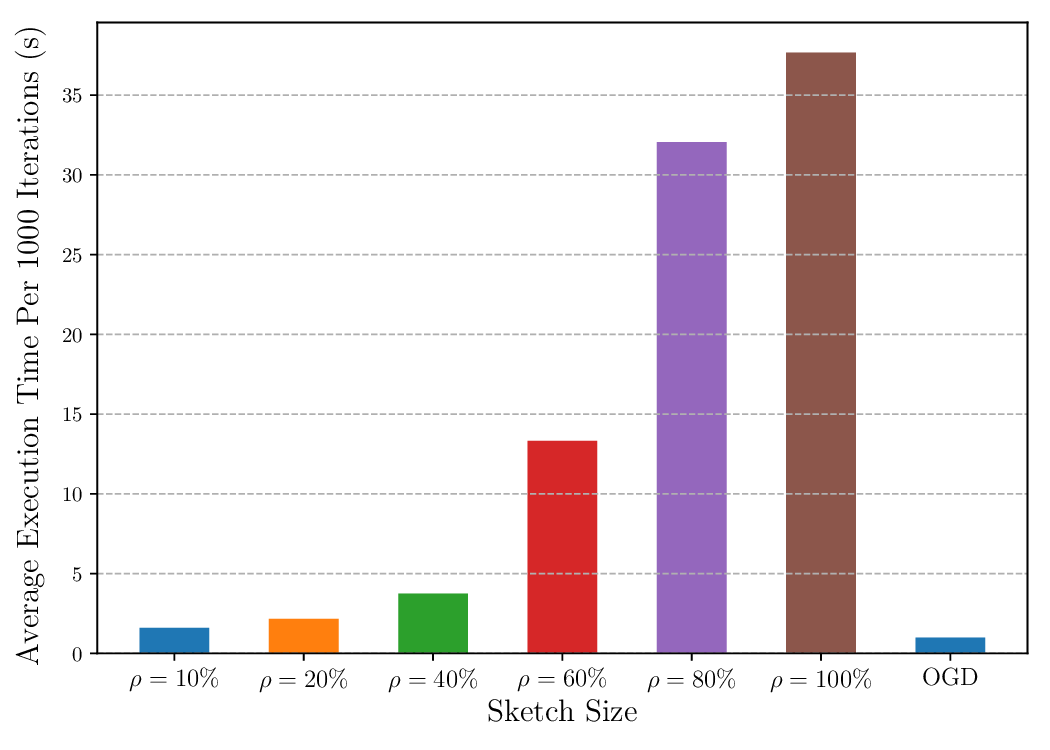} 
 \caption{Comparison of the \texttt{OSNR}'s average execution times.}
 \label{fig:timePoints}
 \end{figure}

\subsection{Online Linearized Optimal Power Flow}
We next consider the the linearized optimal power flow (DC-OPF)~\cite{taylor2015convex} and employ the equality-constrained \texttt{OSNR-EC} method to solve it in an online fashion. We suppose a network over the set of nodes $\mathcal{B} \subset \mathbb{N}$ and of lines $\mathcal{L} \subset \mathcal{B} \times \mathcal{B}$ with generators $\mathcal{G}\subseteq \mathcal{B}$. Let $\mathbf{p} \in \mathbb{R}^{\card \mathcal{B}}$, $\mathbf{P}\in \mathbb{R}^{\card \mathcal{L} \times \card \mathcal{L}}$, and $\bm{\theta}\in \mathbb{R}^{\card \mathcal{B}}$ be the nodal power injection/absorption, the line power flow, and the nodal phase angle. The problem at time $t$ takes the form:
\begin{subequations}
\begin{align}
    \min_{\mathbf{p}_t, \mathbf{P}_t, \bm{\theta}_t}\quad& \sum_{i\in \mathcal{G}} a_i p_{t,i}^2+b_i p_{t,i}\label{eq:dcopf_cost}\\
    \text{s.t.}\;\;\,\quad& \sum_{ij\in\mathcal{L}} p_{t,ij} = p_{t,i}, \quad i \in \mathcal{B} \label{eq:dcopf_nodal}\\
    & P_{t,ij} = B_{ij}(\theta_{t,i}-\theta_{t,j}), \quad ij \in \mathcal{L}\label{eq:dcopf_pf}\\
    & P_{t,ij} \le \overline{s}_{ij}, \quad ij \in \mathcal{L} \label{eq:dcopf_thermal}\\
    & \underline{p}_i \le p_{t,i} \le \overline{p}_i,\quad i \in \mathcal{B},\label{eq:dcopf_gen}
\end{align}
\end{subequations}
where~\eqref{eq:dcopf_cost} is the generation cost with coefficients $a_i>0$ and $b_i \in \mathbb{R}$ for all $i\in \mathcal{G}$,~\eqref{eq:dcopf_nodal} is the nodal power balance,~\eqref{eq:dcopf_pf} is the linearized power flow with $B_{ij} \in \mathbb{R}$ the line susceptance,~\eqref{eq:dcopf_thermal} enforces the thermal line rating $\overline{S}_ij$, and~\eqref{eq:dcopf_gen} imposes the generation limits $\underline{p}_i$ and $\overline{p}_i$ with $\underline{p}_i = \overline{p}_i = d_{t,i} < 0$ for a load $i \in \mathcal{B}\setminus\mathcal{G}$ with demand~$d_{t,i}$.
The inequalities cannot be solved using \texttt{OSNR-EC}. The following related problem is considered:
\begin{subequations}
\begin{align}
    \min_{\mathbf{p}_t, \mathbf{P}_t, \bm{\theta}_t}\quad& \sum_{i\in \mathcal{G}} a_i p_{t,i}^2+b_i p_{t,i} + \sum_{ij\in\mathcal{L}} \exp(\alpha_{ij} P_{t,ij}^2)\label{eq:dcopf_newcost} \\
    \text{s.t.}\;\;\,\quad& \sum_i P_{t,ij} = p_{t,i},\quad i\in \mathcal{G}\\
    &\sum_i P_{ij} = d_i,\quad  i \in \mathcal{B}\setminus\mathcal{G}\label{eq:dcopf_demand}\\
    & P_{ij} = B_{ij}(\theta_i-\theta_j), \quad i \in \mathcal{L},
\end{align}
\end{subequations}
where the parameter $\alpha_{ij}$ is a penalty scaling factor on each of the lines between nodes $i$ and $j$. For this problem, we set $\alpha_{ij} = \overline{s}_{ij}^2$ 

We model the time-varying demand as $d_{i,t+1} = d_{i,t} + \mathcal{N}\left(0, \frac{5}{t}\right)$.

The empirical average regret and cumulative violation are presented in Figure~\ref{reg300} for the IEEE 300-bus system~\cite{Matpower}. Similarly to Section~\ref{ssec:target}, multiple sketch percentages are used,

where $\tau = \lfloor \rho \card(\mathcal{L})\rfloor$ because the effective number of variables has been reduced by the equality constraints. For each sketch size, $1000$ trajectories are sampled over a time horizon of $T=500$ steps. 

The average regret and standard deviation for each sketch percentage are
 presented in Figure~\ref{fig:regvio} for \texttt{OSNR-EC}. Higher sketch percentages produce improved performance as more Hessian information is retained at every step. As with \texttt{the OSNR} method, a sketch percentage of 100\% reduces the update to the full-information Newton's method~\cite{OPENM}, which can be interpreted as a benchmark because no other OCO method is tailored to time-varying equality constraints. Performance improves as the sketch percentage increases, and even the lowest sketch percentage produces adequate results.
 Regarding the average constraint violation in Figure~\ref{vio300}, because the iterates are always feasible with respect to the last-known constraints, the constraint violation is the same for all sketch percentages, as expected. Finally, the execution times of \texttt{OSNR-EC}'s update as a function of sketch percentage is presented in Figure~\ref{fig:timesPower}. It namely illustrate the reduction in computing time in comparison to~\cite{OPENM} ($\rho=100\%$).

 \begin{figure}[htbp]
  \centering
\begin{subfigure}[t]{.9\textwidth}
\centering
\includegraphics[width=1\textwidth]{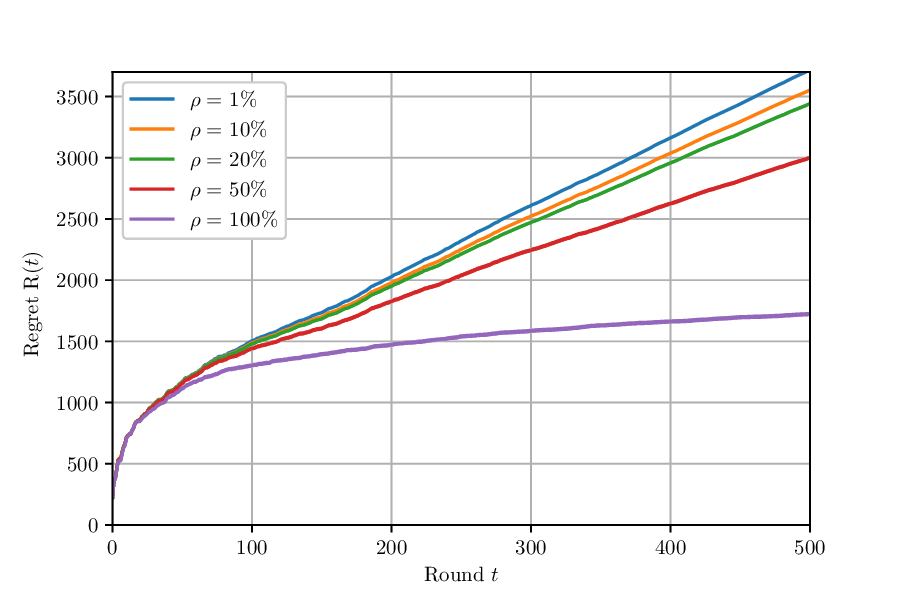} 
 \caption{Average regret.}
 \label{reg300}
 \end{subfigure}

\begin{subfigure}[t]{.9\textwidth}
\centering
\includegraphics[width=1\textwidth]{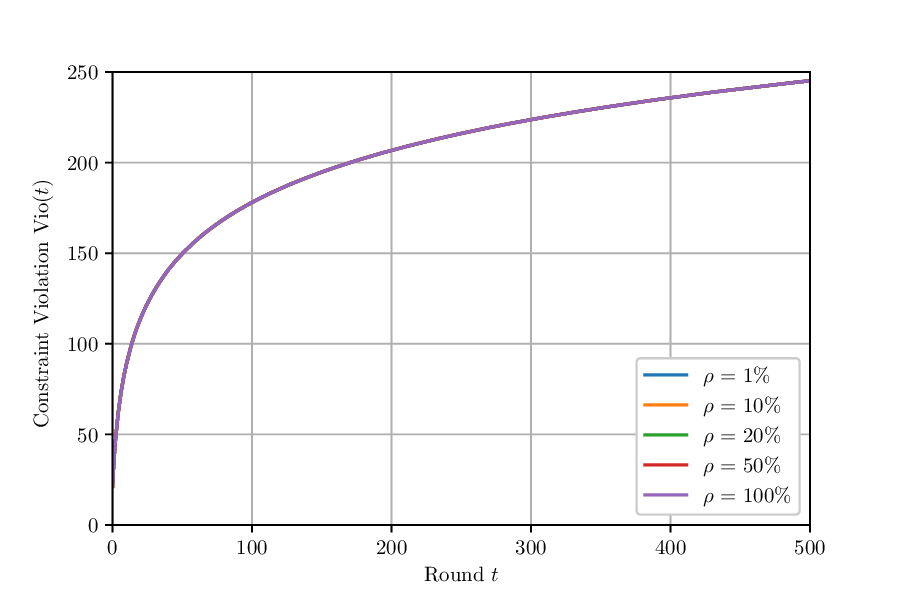} 
\caption{Constraint violation.}
\label{vio300}
\end{subfigure}
 \caption{Average regret and cumulative variation of \texttt{OSNR-EC} for the IEEE 300-bus system for different sketch percentages.}
 \label{fig:regvio}
 \end{figure}

 \begin{figure}[htbp]
  \centering
  \includegraphics[width=0.85\textwidth]{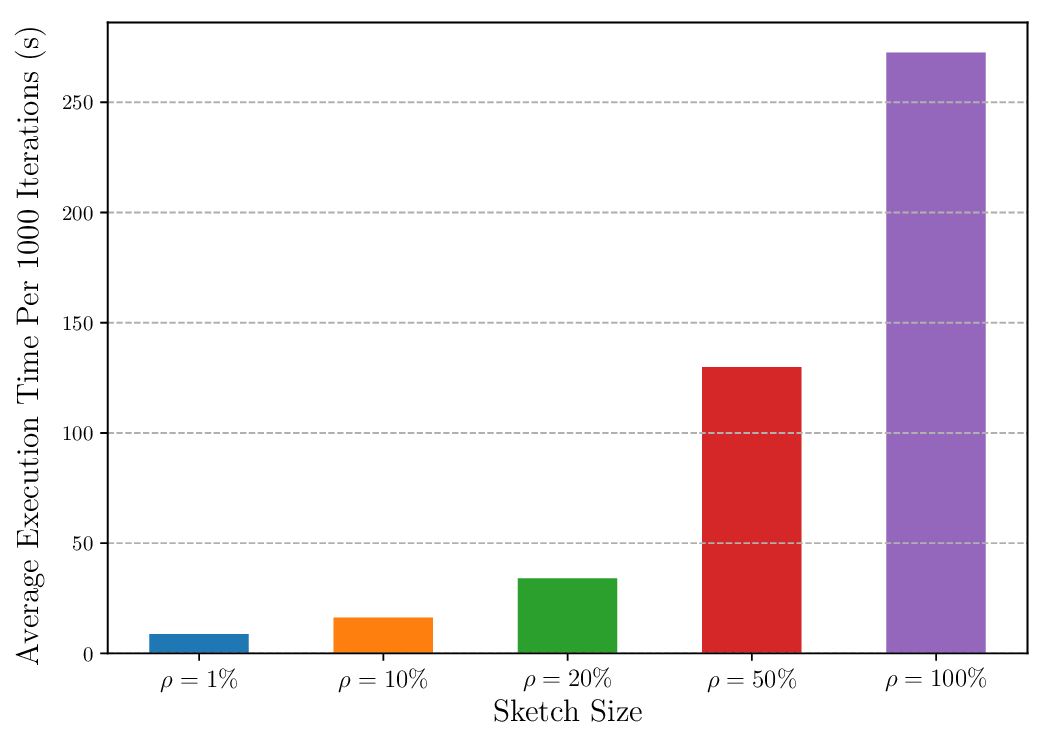} 
 \caption{Comparison of the \texttt{OSNR-EC}'s average execution times.}
 \label{fig:timesPower}
 \end{figure}

\section{Conclusion}\label{sec:conc}

In this paper, we present the online sketched Newton-Raphson method (\texttt{OSNR}), a reduced-computational-complexity, second-order method for online root-finding. Under the quasi-convexity assumption, we bound the dynamic regret of \texttt{OSNR} by $\mathcal{O}(V_T+1)$ where $V_T$ is the 
cumulative variation up to the time horizon $T$. This result is shown to extend to unconstrained and time-varying linear equality-constrained online convex optimization. For the latter, a projection step is added to form the \texttt{OSNR} method with equality constraints (\texttt{OSNR-EC}) leading to $\mathcal{O}(V_b)$ constraint violation bound. 

Two numerical applications are then considered: a high-dimensional point-tracking problem and a time-varying optimal power flow problem. When applied to the point-tracking problem, \texttt{OSNR} outperforms online gradient descent  for all sketch percentages. On the IEEE 300-bus system, \texttt{OSNR-EC} also generates high quality power dispatches even at very low sketch percentages. Both empirical regret and constraint violations are shown to be sublinear. 

In future work, we will consider inequality constraints within the online sketched Newton-Raphson framework. The objective is to expand the scope of the method to tackle more complex online optimization scenarios while controlling the computational cost of the method. Investigating the effect of different sketching processes is also a possible future direction.

\bibliographystyle{plain}
\bibliography{ref.bib}

\end{document}